\numberwithin{equation}{section}
\headsep \linespread{1.1} \textheight 24cm
\newtheorem{lemma}{Lemma}[section]
\newtheorem{definition}{Definition}[section]
\newtheorem{theorem}[definition]{Theorem}
\begin{document}
\title{A Note on the Extinction of a Stochastic Differential Equation  SIS Epidemic Model}%
\author{Wenshu Zhou$^{1,2}$\quad Xu Zhao$^{2}$ \quad Xiaodan Wei$^{3}$\thanks{Corresponding author. \newline E-mail address:   weixiaodancat@126.com}
 \quad  \\
  \small  1. Department of Mathematics, Dalian Minzu University, Dalian
 116600,  P. R. China\\
 \small 2. School of Mathematics, Beifang Minzu University,  Yinchuan 750021, P. R. China\\
 \small 3.  College of Computer  Science, Dalian
Minzu University, Dalian  116600,  P. R. China\\}
 \pagestyle{myheadings}
\markboth{ }%
{}

\date{}
 \maketitle

\begin{abstract}
The aim of this note is to give a new proof to the conjecture, proposed by Gray et al. (2011), on the extinction of a stochastic differential equation SIS epidemic model. The conjecture was proved by  Xu (2017). Our proof is much more direct and simpler.

 \medskip
 {\bf Keywords}. Stochastic differential equation; Epidemic model;  Extinction.

\medskip
{\bf 2010 MSC}. 34F05, 37H10, 60H10, 92D25, 92D30.

 \end{abstract}


%

\section{Introduction}
Epidemic models are inevitably affected by environmental white noise which is an important component in realism. In fact, the presence of a noise source   can modify the behavior of corresponding deterministic evolution of the
system (cf.\cite{Gard,Mao,TBV,DGM,GGHM}). For this reason, many stochastic epidemic models have been developed, see for instance \cite{JJ,LO,YM,LS,ZJ1,LJ,ZHXM,ZJMG,Zhao,WL,LJS} and references therein.
 However, compared to deterministic systems, it is extremely difficult to give the basic reproduction number of stochastic systems. Recently, Gray et al. \cite{GGHM} studied the following stochastic SIS system:
\begin{equation}\label{e0}
\left\{\begin{split}
& d S(t)=[\mu N-\beta S(t)I(t)+\gamma I(t)-\mu S(t)]dt-\sigma S(t)I(t)dB(t), \\
& d I(t) = [\beta S(t)I(t)-(\mu+\gamma)I(t)]dt+\sigma S(t)I(t)dB(t).
\end{split}\right.
\end{equation}
Here $S$ and $I$ denote the susceptible and infected fractions of the population, respectively. $\mu$ is the per capita death rate, and $\gamma$ is the rate at which
infected individuals become cured,   $\beta$ is the
disease transmission coefficient.  $B(t)$ is the Brownian motion with $B(0)=0$ and with   the intensity  $\sigma^2>0$ . Given that $S(t) + I(t) \equiv N$,  they simplified system \eqref{e0} into the following single equation:
\begin{equation}\label{e01}
 \begin{split}
 d I(t) = [(\beta N-\mu-\gamma)I(t)-\beta I^2(t)]dt+\sigma(N-I(t))I(t)dB(t).
\end{split}
\end{equation}
They firstly showed that for any given initial value $I(0)=I_0 \in (0,N)$,   system \eqref{e01}
has a unique global solution $I(t) \in (0,N)$ for all $t \geq 0$ with probability one. Furthermore, they studied the extinction and the persistence of system \eqref{e01}, and proved that if either
\begin{equation}
\begin{split}
(i)~~R_0^S<1~\hbox{and}~ \sigma^2 \leq \frac{\beta}{N}~~\hbox{or}~~(ii)~~\sigma^2>\frac{\beta}{N}\vee\frac{\beta^2}{2(\mu+\beta)}, \end{split}
\end{equation}
where $R_0^S=\frac{\beta N}{\mu+\gamma}-\frac{\sigma^2N^2}{2(\mu+\gamma)}$, then  the disease  will die  out, whereas if $R^S_0 >1$, then the
disease will persist. Naturally, they proposed the following conjecture (see Conjecture 8.1 in \cite{GGHM}):

\indent{\bf Conjecture}~~If $R_0^S<1$ and $\frac{\beta^2}{2(\mu+\gamma)}\geq\sigma^2 >\frac{\beta}{N}$, then the disease will die out.

\noindent{Recently, Xu \cite{Xu}} proved this conjecture  by using Feller's test (cf. Appendix A in \cite{Xu}). The aim of the present paper is to give a new proof of the  conjecture, which   is much more direct and simpler.  Our main result is as follows.

\begin{theorem} Assume that $I(t)$ is the solution of system \eqref{e01} with  the initial value
$I(0)=I_0 \in (0, N)$. We have

(i)~~If either $\frac{\sigma^2}{2}\left(N+\frac{\mu+\gamma}{\beta}\right)\leq \beta$  or  $N \leq \frac{\mu+\gamma}{\beta}$, then
\begin{equation*}
\begin{split}
\mathop{\lim\sup}\limits_{t\rightarrow+\infty}\frac{\log I(t)}{t}\leq(\mu+\gamma)(R_0^S-1)~~a.s.
\end{split}
\end{equation*}

(ii)~~If $\frac{\sigma^2}{2}\left(N+\frac{\mu+\gamma}{\beta}\right)> \beta$ and $N>\frac{\mu+\gamma}{\beta}$, then
\begin{equation*}
\begin{split}
\mathop{\lim\sup}\limits_{t\rightarrow+\infty}\frac{\log I(t)}{t}\leq-\frac{\sigma^2}{2}\left(\frac{\mu+\gamma}{\beta}\right)^2~~a.s.
\end{split}
\end{equation*}
Namely, $I(t)$ goes to zero exponentially a.s. if $R_0^S<1$. In other words, the disease will  die   out with
probability one.
\end{theorem}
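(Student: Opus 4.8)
The plan is to track a modified logarithm $V(t)=\log I(t)+c\,I(t)$, where $c$ is a real constant to be chosen differently in the two cases, and to read off the decay rate from a time average. First I would apply It\^o's formula to $\log I(t)$ using \eqref{e01}: since the diffusion coefficient is $\sigma(N-I)I$, the logarithm picks up the It\^o correction $-\tfrac{\sigma^2}{2}(N-I)^2$, giving
\begin{equation*}
d\log I(t)=f(I(t))\,dt+\sigma\bigl(N-I(t)\bigr)\,dB(t),
\end{equation*}
where, after expanding and using the definition of $R_0^S$,
\begin{equation*}
f(x)=(\mu+\gamma)(R_0^S-1)+(\sigma^2N-\beta)x-\tfrac{\sigma^2}{2}x^2 .
\end{equation*}
Writing $g(x)=(\beta N-\mu-\gamma)x-\beta x^2$ for the drift of $I$ itself, I would then obtain, for $V=\log I+cI$,
\begin{equation*}
V(t)=V(0)+\int_0^t\varphi_c(I(s))\,ds+\int_0^t\psi_c(I(s))\,dB(s),\qquad \varphi_c=f+c\,g,\quad \psi_c(x)=\sigma(N-x)(1+cx).
\end{equation*}

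Next I would dispose of the stochastic term. Because $I(s)\in(0,N)$, the integrand $\psi_c$ is bounded on $[0,N]$, so the quadratic variation of $\int_0^t\psi_c(I)\,dB$ grows at most linearly in $t$; the strong law of large numbers for local martingales then forces $\tfrac1t\int_0^t\psi_c(I)\,dB\to0$ a.s. Since $cI(t)/t\to0$ as well (again by boundedness), this yields
\begin{equation*}
\limsup_{t\to\infty}\frac{\log I(t)}{t}=\limsup_{t\to\infty}\frac{V(t)}{t}\le \sup_{x\in[0,N]}\varphi_c(x)\qquad\text{a.s.,}
\end{equation*}
valid for every choice of $c$. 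The whole problem is thereby reduced to minimizing $\sup_{[0,N]}\varphi_c$ over $c$, a question about a single quadratic $\varphi_c(x)=(\mu+\gamma)(R_0^S-1)+B_cx-C_cx^2$ with $B_c=(\sigma^2N-\beta)+c(\beta N-\mu-\gamma)$ and $C_c=\tfrac{\sigma^2}{2}+c\beta$.

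The crux is the choice of $c$, and it is exactly here that the dichotomy of the theorem appears. In case (i) I would pick $c$ so that $\varphi_c$ is concave and nonincreasing on $[0,N]$, i.e. $C_c\ge0$ and $B_c\le0$; then the supremum is attained at $x=0$ and equals $\varphi_c(0)=(\mu+\gamma)(R_0^S-1)$. A short computation shows such a $c$ exists precisely when $\tfrac{\sigma^2}{2}\bigl(N+\tfrac{\mu+\gamma}{\beta}\bigr)\le\beta$ (choosing $B_c=0$, for which $C_c$ has numerator $\beta\bigl[\beta-\tfrac{\sigma^2}{2}(N+\tfrac{\mu+\gamma}{\beta})\bigr]\ge0$) or when $N\le\tfrac{\mu+\gamma}{\beta}$ (letting $c\to+\infty$, so that the vertex of $\varphi_c$ slides to the origin or to its left and $\sup_{[0,N]}\varphi_c\to(\mu+\gamma)(R_0^S-1)$); these are exactly the hypotheses of (i). In case (ii), where $N>\tfrac{\mu+\gamma}{\beta}$ so the endemic value $x^\ast:=N-\tfrac{\mu+\gamma}{\beta}$ lies in $(0,N)$ with $g(x^\ast)=0$, I would instead choose $c$ so that the vertex sits at $x^\ast$, i.e. $B_c=2C_cx^\ast$; then $\sup_{[0,N]}\varphi_c=\varphi_c(x^\ast)=f(x^\ast)=-\tfrac{\sigma^2}{2}\bigl(\tfrac{\mu+\gamma}{\beta}\bigr)^2$. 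One must check $C_c>0$ for this to be a genuine maximum: its numerator equals $\beta\bigl[\tfrac{\sigma^2}{2}(N+\tfrac{\mu+\gamma}{\beta})-\beta\bigr]$, positive by the first hypothesis of (ii), and its denominator $\beta N-\mu-\gamma$ is positive by the second, so $C_c>0$ as required.

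The main obstacle is precisely this optimization over $c$: one must recognize that the two natural test points are the disease-free state $x=0$ and the deterministic endemic equilibrium $x^\ast=N-\tfrac{\mu+\gamma}{\beta}$, and that adding the correction $c\,I$ (equivalently, exploiting $\tfrac1t\int_0^t g(I)\,ds\to0$) is what lets one replace the too-crude bound $\sup_{[0,N]}f$—which is nonnegative in the conjecture regime $\sigma^2>\beta/N$—by $f(0)$ or $f(x^\ast)$. Once the bounds in (i) and (ii) are established, the final claim follows at once: the bound in (ii) is strictly negative for all admissible parameters, while the bound in (i) equals $(\mu+\gamma)(R_0^S-1)<0$ exactly when $R_0^S<1$; hence $\limsup_{t\to\infty}t^{-1}\log I(t)<0$ a.s., so $I(t)\to0$ exponentially fast with probability one.
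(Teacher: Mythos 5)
Your proof is correct, and it takes a genuinely different route from the paper's. Both arguments begin with It\^o's formula for $\log I(t)$ and kill the stochastic integrals with the strong law of large numbers for martingales, but they diverge in how the quadratic drift term is handled. The paper works with time averages: integrating \eqref{e01} gives the identity $\langle I^2(t)\rangle=\left(N-\frac{\mu+\gamma}{\beta}\right)\langle I(t)\rangle+\psi(t)$ with $\psi(t)\to 0$ a.s.\ (equation \eqref{e03}), which is substituted into the averaged $\log I$ equation so that only a term linear in $\langle I(t)\rangle$, with coefficient $\frac{\sigma^2}{2}\left(N+\frac{\mu+\gamma}{\beta}\right)-\beta$, survives; when this coefficient is positive, the paper invokes its Lemma 2.1 --- proved from \eqref{e03} via the H\"older inequality $\langle I(t)\rangle^2\le\langle I^2(t)\rangle$ --- which gives $\limsup_{t\to+\infty}\langle I(t)\rangle\le N-\frac{\mu+\gamma}{\beta}$, and evaluates the linear bound there. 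In your language, the paper's substitution is precisely the choice $c=-\frac{\sigma^2}{2\beta}$, which degenerates the parabola $\varphi_c$ into a line; that degeneracy is exactly why the paper then needs asymptotic information about the time average $\langle I(t)\rangle$. Your concave choices of $c$ need only the pointwise bound $I(s)\in(0,N)$, so you bypass the H\"older step and Lemma 2.1 altogether: the whole proof reduces to maximizing a quadratic on $[0,N]$, with the endemic value $x^\ast=N-\frac{\mu+\gamma}{\beta}$ appearing as the optimally placed vertex rather than as a bound on $\limsup\langle I(t)\rangle$. What the paper's route buys is Lemma 2.1 itself, an estimate of independent interest; what yours buys is a shorter, self-contained argument, at the cost of the optimization over $c$. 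One point to tighten in your case (i): the choice $B_c=0$ yields $C_c\ge 0$ only when the denominator $\beta N-\mu-\gamma$ is positive, which is guaranteed when your first hypothesis holds and $N>\frac{\mu+\gamma}{\beta}$; when $N\le\frac{\mu+\gamma}{\beta}$ you must rely on your $c\to+\infty$ argument (a single sufficiently large $c$ suffices if $\beta N<\mu+\gamma$, while the borderline case $\beta N=\mu+\gamma$ needs the bound applied along a countable sequence $c_n\to+\infty$ before intersecting the a.s.\ events). This is a one-line fix, not a gap.
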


\section{Proof of  Theorem 1.1}

For convenience we introduce the notation: $\langle x(t)\rangle=\frac{1}{t}\int_0^tx(s)ds.$

The following lemma will play an important role in the proof of Theorem 1.1.

\begin{lemma}Assume that $I(t)$ is the solution of system \eqref{e01} with the initial value
$I(0)=I_0\in (0, N)$. We have

(i)~~If $N\leq \frac{\mu+\gamma}{\beta}$, then $
\lim\limits_{t\rightarrow+\infty}\langle I(t) \rangle=0~~a.s.
$

(ii)~~If $N>\frac{\mu+\gamma}{\beta}$, then
$
\mathop{\lim\sup}\limits_{t\rightarrow+\infty}\langle I(t) \rangle\leq N-\frac{\mu+\gamma}{\beta}~~a.s.
$
\end{lemma}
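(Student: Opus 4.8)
The plan is to integrate the SDE \eqref{e01} directly rather than working with $\log I$, since the lemma concerns the time average $\langle I(t)\rangle$ itself. Integrating \eqref{e01} from $0$ to $t$ and dividing by $t$ gives
\[
\frac{I(t)-I_0}{t}=(\beta N-\mu-\gamma)\langle I(t)\rangle-\beta\langle I^2(t)\rangle+\frac{M(t)}{t},
\]
where $M(t)=\sigma\int_0^t (N-I(s))I(s)\,dB(s)$ is a continuous local martingale. The key simplification is to eliminate the unwanted quadratic average: by the Cauchy--Schwarz inequality $\langle I(t)\rangle^2\leq\langle I^2(t)\rangle$, so $-\beta\langle I^2(t)\rangle\leq-\beta\langle I(t)\rangle^2$, and writing $\phi(t):=\langle I(t)\rangle$ we obtain the closed scalar inequality
\[
\beta\,\phi(t)^2-(\beta N-\mu-\gamma)\,\phi(t)\leq-\frac{I(t)-I_0}{t}+\frac{M(t)}{t}.
\]

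Next I would show the right-hand side tends to $0$ almost surely. Since the global solution satisfies $I(t)\in(0,N)$, we have $|I(t)-I_0|\leq N$, so $(I(t)-I_0)/t\to 0$ deterministically. For the martingale term, the bound $(N-x)x\leq N^2/4$ on $[0,N]$ gives the quadratic variation estimate $\langle M\rangle_t=\sigma^2\int_0^t(N-I(s))^2I^2(s)\,ds\leq \sigma^2 N^4 t/16$, hence $\limsup_{t\to\infty}\langle M\rangle_t/t<\infty$ a.s.; the strong law of large numbers for local martingales then yields $M(t)/t\to 0$ a.s. I expect this martingale step to be the main technical point, as it is where the boundedness of the diffusion coefficient must be exploited and the correct large-time law invoked.

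Finally I would pass to the limit superior. Setting $\ell=\limsup_{t\to\infty}\phi(t)\in[0,N]$ and evaluating the inequality along a sequence realizing $\ell$, the vanishing right-hand side leaves $\ell\bigl(\beta\ell-(\beta N-\mu-\gamma)\bigr)\leq 0$. In case (ii), where $\beta N-\mu-\gamma>0$, if $\ell>0$ I may divide by $\ell$ to get $\ell\leq N-\frac{\mu+\gamma}{\beta}$, while $\ell=0$ trivially satisfies the same bound. In case (i), where $\beta N-\mu-\gamma\leq 0$, the factor $\beta\ell-(\beta N-\mu-\gamma)\geq\beta\ell\geq 0$, so the product is simultaneously $\geq 0$ and $\leq 0$, forcing it to vanish; a short inspection of the strict and boundary subcases ($N<\frac{\mu+\gamma}{\beta}$ versus $N=\frac{\mu+\gamma}{\beta}$) shows $\ell=0$ either way. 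Since $\phi(t)\geq 0$ always, $\liminf_{t\to\infty}\phi(t)\geq 0$, and combining with $\limsup=0$ upgrades the conclusion in (i) from $\limsup$ to $\lim$, completing the proof.
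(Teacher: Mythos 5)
Your proof is correct and takes essentially the same route as the paper's: integrate \eqref{e01}, apply the Cauchy--Schwarz (H\"{o}lder) inequality $\langle I(t)\rangle^2\leq\langle I^2(t)\rangle$ to obtain the closed quadratic inequality \eqref{e06}, eliminate the boundary and stochastic terms via the strong law of large numbers for martingales, and pass to the limit superior. The differences are only presentational: you justify the martingale step explicitly through the quadratic-variation bound where the paper cites the large number theorem, and your terminal case analysis of the inequality $\ell\bigl(\beta\ell-(\beta N-\mu-\gamma)\bigr)\leq 0$ is a more spelled-out version of the paper's one-line limsup argument.
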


\begin{proof}Integrating \eqref{e01} from $0$ to $t$ yields
\begin{equation}\label{e061}
\begin{split}I(t)-I(0)=&(\beta N-\mu-\gamma)\int_0^t I(s)ds-\beta\int_0^t I^2(s)ds\\
&+\sigma \int_0^t (N-I(s))I(s)dB(s),
\end{split}
\end{equation}
and dividing $t$ on both sides of \eqref{e061} gives
\begin{equation}\label{e03}
\begin{split}\langle I^2(t)\rangle=\left(N-\frac{\mu+\gamma}{\beta}\right)\langle I(t)\rangle+\psi(t),
\end{split}
\end{equation}
where $\psi(t)=\frac{I(0)-I(t)}{\beta t}+\frac{\sigma}{\beta t} \int_0^t (N-I(s))I(s)dB(s)$. By the H\"{o}lder inequality, we have
\begin{equation}\label{e06}
\begin{split}\langle I(t)\rangle^2\leq\left(N-\frac{\mu+\gamma}{\beta}\right)\langle I(t)\rangle+\psi(t).
\end{split}
\end{equation}
By the large number theorem for martingales (cf. \cite{GGHM,Mao}), we have
 \begin{equation*}
\begin{split}\lim\limits_{t\rightarrow+\infty} \frac{\sigma}{\beta t}\int_0^t (N-I(s))I(s)dB(s)=0~~a.s.
\end{split}
\end{equation*}
This  leads to
\begin{equation}\label{e451}
\begin{split}\lim\limits_{t\rightarrow+\infty}\psi(t)=0~~a.s.
\end{split}
\end{equation}

If $N\leq\frac{\mu+\gamma}{\beta}$, then the conclusion (i) follows from \eqref{e06} and \eqref{e451} immediately.

If $N>\frac{\mu+\gamma}{\beta}$, we deduce from \eqref{e06} and \eqref{e451} that
\begin{equation*}\begin{split}
\Big(\mathop{\lim\sup}\limits_{t\rightarrow+\infty}\langle I(t)\rangle\Big)^2\leq\left(N-\frac{\mu+\gamma}{\beta}\right)\mathop{\lim\sup}\limits_{t\rightarrow+\infty}\langle I(t)\rangle~~a.s.
\end{split}
\end{equation*}
This implies the  conclusion (ii), and ends the proof of Lemma 2.1.
\end{proof}

Now we can give the proof of Theorem 1.1.

\noindent{\bf Proof of Theorem 1.1}~~By the It\^{o} formula, we deduce from \eqref{e01}  that
\begin{equation}\label{e04}
\begin{split}d\log I(t)=& \beta N-\mu-\gamma -\beta I(t)-\frac{1}{2}\sigma^2(N-I(t))^2+\sigma (N-I(t))dB(t)\\
=&\beta N-\mu-\gamma-\frac{1}{2}\sigma^2 N^2+(\sigma^2 N-\beta)I(t)-\frac12\sigma^2I^2(t)+\sigma (N-I(t))dB(t).
\end{split}
\end{equation}
Integrating \eqref{e04} from $0$ to $t$ and dividing $t$ on  both sides of the resulting equation, we obtain
\begin{equation}\label{e05}
\begin{split}\frac{\log I(t)-\log I(0)}{t}=&  \beta N-\mu-\gamma-\frac{1}{2}\sigma^2 N^2+(\sigma^2 N-\beta)\langle I(t)\rangle-\frac12\sigma^2\langle I^2(t)\rangle\\
&+\frac{\sigma}{t}\int_0^t (N-I(s))dB(s).
\end{split}
\end{equation}
Substituting \eqref{e03} into \eqref{e05} yields
\begin{equation}\label{e415}
\begin{split}\frac{\log I(t)}{t}=\left(\beta N-\mu-\gamma-\frac{1}{2}\sigma^2N^2\right)+\left[\frac{\sigma^2}{2}\left(N+\frac{\mu+\gamma}{\beta}\right)-\beta\right]\langle I(t)\rangle+ \Psi(t),
\end{split}
\end{equation}
where $\Psi(t)=\frac{\log I(0)}{t}+\frac{\sigma}{t}\int_0^t (N-I(s))dB(s)-\frac{\sigma^2}{2}\psi(t).$ By the large number theorem for martingales (cf. \cite{GGHM,Mao}), we have
 \begin{equation*}
\begin{split}\lim\limits_{t\rightarrow+\infty} \frac{\sigma}{t}\int_0^t  (N-I(s))dB(s)=0~~a.s.
\end{split}
\end{equation*}
This, together with \eqref{e451}, leads to
\begin{equation}\label{e07}
\begin{split}\lim\limits_{t\rightarrow+\infty} \Psi(t) =0~~a.s.
\end{split}
\end{equation}
If either $\frac{\sigma^2}{2}\left(N+\frac{\mu+\gamma}{\beta}\right)\leq \beta$ or $N\leq \frac{\mu+\gamma}{\beta}$, by Lemma 2.1 (i) and \eqref{e07},  we derive from  \eqref{e415} that
\begin{equation*}
\begin{split}\mathop{\lim\sup}\limits_{t\rightarrow+\infty}\frac{\log I(t)}{t}\leq\beta N-\mu-\gamma-\frac{1}{2}\sigma^2N^2 =(\mu+\gamma)(R_0^S-1)~~a.s.
\end{split}
\end{equation*}
 If $\frac{\sigma^2}{2}\left(N+\frac{\mu+\gamma}{\beta}\right)>\beta$ and $N>\frac{\mu+\gamma}{\beta}$,   by  \eqref{e07} and Lemma 2.1 (ii),  we derive from  \eqref{e415} that
\begin{equation*}
\begin{split}
\mathop{\lim\sup}\limits_{t\rightarrow+\infty}\frac{\log I(t)}{t}&=\left(\beta N-\mu-\gamma-\frac{1}{2}\sigma^2N^2\right)+\left[\frac{\sigma^2}{2}\left(N+\frac{\mu+\gamma}{\beta}\right)-\beta\right]\mathop{\lim\sup}\limits_{t\rightarrow+\infty}\langle I(t)\rangle\\
&\leq \left(\beta N-\mu-\gamma-\frac{1}{2}\sigma^2N^2\right)+\left[\frac{\sigma^2}{2}\left(N+\frac{\mu+\gamma}{\beta}\right)-\beta\right] \left(N-\frac{\mu+\gamma}{\beta}\right)\\
&=-\frac{\sigma^2}{2}\left(\frac{\mu+\gamma}{\beta}\right)^2~~a.s.
\end{split}
\end{equation*}
The proof of Theorem 1.1 is completed.

\vskip0.4cm
\section*{Acknowledgments}
The research  was  supported  by  the NSFC (11571062),  the Program for Liaoning
Innovative Talents in University (LR2016004),  and  the Fundamental Research Fund  for the Central Universities (DMU).
\par


\begin{thebibliography}{99}

\bibitem{Gard}T. C. Gard, Introduction to Stochastic Differential Equations, Marcel Dekker, New York, 1988.

\bibitem{Mao} X. Mao, Stochastic Differential Equations and Applications, 2nd ed., Horwood, Chichester,
UK, 2008.


\bibitem{TBV}E. Tornatore, S. M. Buccellato and
P. Vetro, Stability of a stochastic SIR system, Physica A 354 (2005) 111--126.

\bibitem{DGM}N. Dalal, D. Greenhalgh  and X. Mao, A stochastic model of AIDS and condom use, J.
Math. Anal. Appl.  325 (2007)  36--53.




\bibitem{GGHM} A. Gray, D. Greenhalgh, L. Hu, X. Mao and J. Pan, A stochastic differential equation SIS epidemic model, SIAM J. Appl. Math. 71(3)(2011) 876--902.

\bibitem{JJ} C. Ji and D. Jiang, Dynamics of a multigroup SIR epidemic model with stochastic perturbation, Automatica 48 (2012) 121--131.

\bibitem{LO}A. Lahrouz and L. Omari, Extinction and stationary distribution of a stochastic SIRS epidemic model with non-linear incidence, Statist. Probab. Lett. 83(2013) 960--968.


   \bibitem{YM} Q. Yang and X. Mao, Extinction and recurrence of multi-group SEIR epidemic models with
stochastic perturbations, Nonlinear Anal. RWA  14 (2013), 1434--1456.

\bibitem{LS}A. Lahrouz and A. Settati, Necessary and sufficient condition for extinction and
persistence of SIRS system with random perturbation, Appl. Math. Comput. 233 (2014) 10--19.

    \bibitem{ZJ1}Y. Zhao and D. Jiang, The threshold of a stochastic SIRS epidemic model with
saturated incidence, Appl. Math. Lett. 34 (2014) 90--93.


   \bibitem{LJ} Y. Lin and D. Jiang, Threshold behavior in a stochastic SIS epidemic model
with standard incidence, J. Dyn. Diff. Equat. 26(2014)1079--1094.

\bibitem{ZHXM}X. Zhang, H. Huo, H. Xiang and X. Meng, Dynamics of the deterministic and stochastic SIQS epidemic
model with non-linear incidence, Appl. Math. Comput. 243 (2014) 546--558.

\bibitem{ZJMG}Y. Zhao, D. Jiang, X. Mao and A. Gray, The threshold of a stochastic SIRS epidemic model in a population with varying size, Discrete Contin. Dyn. Syst. Ser. B  20 (4)(2015) 1277--1295.



   \bibitem{Zhao}D. Zhao, Study on the threshold of a stochastic SIR epidemic model
and its extensions, Commun Nonlinear Sci Numer Simulat 38 (2016) 172--177.

\bibitem{WL}F. Wei and Q. Liu, Long-time behavior of a stochastic epidemic model with
varying population size, Physica A 470 (2017) 146--153.

\bibitem{LJS}Q. Liu, D. Jiang, N. Shi, T. Hayat and A. Alsaedi, Stationary distribution and extinction of a stochastic SIRS
epidemic model with standard incidence, Physica A 469 (2017) 510--517.

\bibitem{Xu}C. Xu, Global threshold dynamics of a stochastic differential equation
SIS model, J. Math. Anal. Appl. 447 (2017) 736--757.







\end{thebibliography}
\end{document}